\theoremstyle{plain}
\newtheorem{thm}{Theorem}
\newtheorem{lem}[thm]{Lemma}
\newtheorem{cor}[thm]{Corollary}
\newtheorem{prop}[thm]{Proposition}
\theoremstyle{definition}
\newtheorem{defn}[thm]{Definition}
\newtheorem{question}[thm]{Question}
\newtheorem{rmks}[thm]{Remarks}
\numberwithin{thm}{section}
\numberwithin{equation}{section}
\newcommand{\C}{{\mathbb C}}
\newcommand{\R}{{\mathbb R}}
\newcommand{\Z}{{\mathbb Z}}
\begin{document}

\title{Algebraic flat connections and o-minimality}
\author{H\'el\`ene Esnault \and Moritz Kerz}
 \address{Freie Universit\"at Berlin, Berlin,  Germany;
 }
 \email{esnault@math.fu-berlin.de }
 \address{   Fakult\"at f\"ur Mathematik \\
 Universit\"at Regensburg \\
 93040 Regensburg, Germany}
 \email{moritz.kerz@mathematik.uni-regensburg.de}
 \thanks{ The second author is supported by the SFB 1085 Higher Invariants, Universit\"at Regensburg}
\dedicatory{\`A G\'erard Laumon, en t\'emoignage d'amiti\'e}

\begin{abstract}
  We prove that an algebraic flat connection has $\mathbb R_{\rm an,exp}$-definable flat
  sections if and only if it is regular singular with unitary monodromy eigenvalues at
  infinity, refining previous work of Bakker–Mullane. This provides an o-minimal
  characterization of classical properties of the Gauss-Manin connection.
\end{abstract}

\maketitle

\section{Introduction}\label{sec:intro}

Let $X$ be a smooth complex algebraic variety.
Let $\mathcal E$ be an algebraic vector bundle on $X$ and $\nabla\colon \mathcal E\to
\Omega^1_X\otimes_{\mathcal O_X} \mathcal E$ be a flat algebraic connection.
We denote the analytic spaces underlying $X$ and $\mathcal E$ by the same symbol and we
endow them with their canonical $\mathbb R_{\rm an,exp}$-definable structure,
see~\cite[Sec.~2 and 3]{BBT23}. Roughly speaking, a function on $X$ is definable in this structure if it
can be expressed in terms of the usual logical operators (including  quantifiers),  algebraic
functions,  real analytic functions on compact domains like $[0,1]^n$ and the real
exponential function. A fundamental property of this structure is its o-minimality, which essentially means that definable functions don't exhibit oscillatory behavior.

We say that $(\mathcal E,\nabla)$ has {\it definable flat sections} if  any flat section
$\sigma\colon U\to \mathcal E$ is definable for any definable open
$U\subset X$.

\begin{thm}\label{thm:mainglob}
The algebraic flat bundle $(\mathcal E,\nabla)$ has definable flat sections if and only if
it is regular singular with unitary monodromy eigenvalues at infinity.
\end{thm}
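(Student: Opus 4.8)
The plan is to localize the problem along the boundary of a good compactification and to isolate, in each direction, the single feature that the o-minimal structure can detect, namely infinite oscillation. Choose a smooth compactification $j\colon X\hookrightarrow\overline X$ with $D=\overline X\setminus X$ a simple normal crossings divisor; by \cite{BBT23} the $\mathbb{R}_{\mathrm{an,exp}}$-structure on $X$ is induced from $\overline X$, so both the definability of flat sections and the two algebraic conditions are local along $D$ and may be tested in a boundary chart $(\Delta^*)^k\times\Delta^{n-k}$ with coordinates $z_1,\dots,z_n$ and $D=\{z_1\cdots z_k=0\}$. Throughout I would use the elementary consequence of o-minimality that a definable function $f\colon(0,\e)\to\R$ has finitely many zeros and is eventually monotone: on a definable open $U$ the arguments $\theta_i=\arg z_i$ take values in a finite union of intervals, so $\cos$ and $\sin$ of bounded expressions in the $\theta_i$ are definable (restricted analytic functions), whereas $\cos(c\log|z_i|)$ with $c\neq 0$ is not, because $\log|z_i|\to-\infty$ along any approach to $D$.

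For the direction asserting that regular singularity with unitary monodromy eigenvalues implies definability, I would invoke Deligne's canonical extension across $D$. A multivalued flat section then has the shape $\sigma=\exp\bigl(\sum_i R_i\log z_i\bigr)\,h$, where the $R_i$ are the commuting residue endomorphisms and $h$ is a single-valued holomorphic section of the canonical extension, hence bounded of moderate growth and definable on relatively compact sets because it is restricted analytic. Writing $R_i=S_i+N_i$, the nilpotent parts contribute $\exp(N_i\log z_i)$, a polynomial in $\log z_i=\log|z_i|+i\theta_i$ and thus definable; in an eigenbasis the semisimple parts contribute factors $z_i^{\lambda}=|z_i|^{\lambda}e^{i\lambda\theta_i}$, and unitarity of the monodromy eigenvalues is exactly reality of the residue eigenvalues $\lambda$, so that $|z_i|^{\lambda}=\exp(\lambda\log|z_i|)$ and $e^{i\lambda\theta_i}$ are both definable. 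Covering $U$ by simply connected definable pieces and matching branches through the (definable) monodromy action yields definability of $\sigma$.

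For the converse I would argue by contraposition, and it suffices to produce a single non-definable flat section on a single definable $U$, chosen adapted to the offending data. If the connection is irregular along some component, then after a ramified cover $z_i=w^{m}$, which is algebraic hence definable, the Levelt--Turrittin decomposition yields a summand $E^{q}\otimes(\text{regular})$ with $q=\sum_{j\ge1}c_jw^{-j}$ and $c_N\neq0$; a flat section of this summand carries the factor $e^{-q}$, and along a ray $w=re^{i\theta_0}$ its phase is $\exp(-i\,\mathrm{Im}\,q)$ with $\mathrm{Im}\,q\sim r^{-N}\,\mathrm{Im}(c_Ne^{-iN\theta_0})$. For all but finitely many $\theta_0$ the leading coefficient is nonzero, so a real coordinate of $\sigma$ on the semialgebraic ray changes sign infinitely often as $r\to0$, contradicting o-minimality; non-definability upstairs forces non-definability of the underlying section downstairs. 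If instead the connection is regular singular but some monodromy eigenvalue $\mu$ has $|\mu|\neq1$, I would choose $\sigma$ in the corresponding generalized eigenspace of the local monodromy $T_i$; then $\sigma$ has leading term $z_i^{\lambda}(\log z_i)^{m}h$ with $\mathrm{Im}\,\lambda=b\neq0$ and $h$ nonvanishing, so its real coordinate behaves like $|z_i|^{a}\cos(b\log|z_i|+\cdots)$ and again has infinitely many zeros along a ray, impossible for a definable function.

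The main obstacle I expect lies in the necessity direction, in guaranteeing that the oscillation is genuinely present and non-cancelable on a definable test locus. Restricting a generic flat section to a ray is not enough, because a non-unitary eigenvalue can be dominated by a unitary one of larger modulus; the remedy is to select the flat section from the generalized monodromy eigenspace for the offending eigenvalue, respectively from the irregular Levelt--Turrittin summand, so that the leading asymptotics are controlled by a single exponent and the oscillatory factor cannot be masked. Making this selection precise, and controlling the interaction between the chosen section, the holomorphic factor $h$, and the unipotent $\log$-terms via Deligne's asymptotic theory, is the technical heart of the argument, and is exactly the point at which the present characterization refines the sufficiency results of Bakker--Mullane.
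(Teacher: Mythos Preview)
Your ``if'' direction is essentially the paper's (i.e.\ Bakker--Mullane's) argument and is fine.

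For the ``only if'' direction your route diverges from the paper's, and there is a real gap. The paper does \emph{not} try to show directly that a chosen flat section has infinitely many real zeros on a ray. Instead, after the ramified pullback and a meromorphic gauge, it invokes the Multisummation Theorem to obtain, for all but finitely many directions $d$, an analytic gauge $P_d$ on the ray with $P_d(0)=\mathrm{Id}$ whose entries are \emph{multisums}; then it passes to the van den Dries--Speissegger o-minimal expansion $\mathbb{R}_{\mathrm{sum,exp}}$ in which $P_d$ is definable, multiplies the assumed-definable fundamental matrix by $P_d^{-1}$ and by the $\mathbb{R}_{\mathrm{an,exp}}$-definable scalar $\exp(-\mathrm{Re}(Q_1+b_1\log z))$, and is left with the pure phase $\exp(i\,\mathrm{Im}(Q_1(z)+b_1\log z))$, which is shown non-definable in \emph{any} o-minimal structure by the oscillation lemma. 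The point is that the asymptotic gauge is not known to be $\mathbb{R}_{\mathrm{an,exp}}$-definable, so one works in a larger o-minimal structure where it is.

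Your argument never produces an analytic section on a sector with the factored form you assert. The Levelt--Turrittin decomposition you quote is \emph{formal}: it gives $\hat P\in\mathrm{GL}_r(\mathbb{C}\llbracket w\rrbracket)$ conjugating to a split normal form, but ``a flat section of this summand'' has no analytic meaning until you lift $\hat P$ to an honest holomorphic $P$ on a sector with asymptotic expansion $\hat P$. That lift is the Hukuhara--Turrittin (Main Asymptotic Existence) theorem, or more sharply the Multisummation Theorem, and it is exactly the missing ingredient; your reference to ``Deligne's asymptotic theory'' covers only the regular singular case, where $h$ is genuinely holomorphic across the divisor, and does not apply to the irregular summand. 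Once you do have such a $P$ with $P(w)\to\mathrm{Id}$, your direct oscillation argument can indeed be pushed through (the phase contribution of $P$ is bounded and that of $w^{b}$ grows like $|\log r|$, both dominated by the $r^{-N}$ blow-up of $\mathrm{Im}\,q$), and this would in fact be more elementary than the paper's approach since it avoids $\mathbb{R}_{\mathrm{sum,exp}}$ altogether; but as written the step from the formal summand to an analytic flat section with controlled prefactor is asserted rather than supplied.
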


\begin{rmks}
  \begin{itemize}
  \item[(1)]
    The `if' part is due to Bakker--Mullane~\cite[Thm.~1.2]{BM23}  and the `only if' part refines their Example~3.3.
  \item[(2)]
One can easily show that the property of definable flat sections holds for all definable open subsets if and only if it holds for a single finite covering by simply connected open definable subsets (see the proof of Lemma~\ref{lem:redpolysec}). In addition, all flat sections on a given definable $U$ are definable if and only  this property is true on a basis of such.
\item[(3)]
The property of unitary monodromy eigenvalues at infinity can be defined in two equivalent ways:
\begin{itemize}
\item[(i)] By considering a normal crossings compactification
$\overline X$ of $X$ and requiring that the eigenvalues of the monodromy action of a small
loop around any irreducible component of $\overline X\setminus X$ have absolute value one
(see~\cite[Def.~1.1]{BM23}).
\item[(ii)] By requiring that for all non-singular complex curves $C$ and all morphisms
  $\psi\colon C\to X$ the flat connection $\psi^*(\mathcal E,\nabla)$ has monodromy
  eigenvalues of absolute value one at each point of  the   smooth  compactification
$\overline C$ of $C$.
\end{itemize}
\item[(4)] For the definition of regular singular algebraic
  connections see~\cite[Def.~4.5]{Del70}.
\end{itemize}

\end{rmks}

The `only if' part of Theorem~\ref{thm:mainglob} is shown by reducing to a local problem in
one variable in Section~\ref{sec:reducsec} and using the solution theory of irregular singular complex differential
equations in Section~\ref{sec:onevariable}. More precisely, we use the Multisummation Theorem, recalled in Section~\ref{sec:remdiffeq}, and an o-minimal expansion of
$\mathbb R_{\rm an,exp}$ by  `multisums' due to van den Dries and
Speissegger~\cite{vdDS00}, recalled in Section~\ref{sec:remomin}.

\smallskip

Our motivation for working out a proof of Theorem~\ref{thm:mainglob} comes from the study
of the Gauss-Manin connection. In fact, Theorem~\ref{thm:mainglob} allows us to reformulate
Griffiths' theorem that the Gauss-Manin connection is regular singular \cite[Thm.~(4.3)]{Gri70}, together
with the Griffiths--Landman--Grothendieck  Monodromy
  Theorem~\cite[Thm.~(3.1)]{Gri70}. Those two theorems can now be expressed equivalently
  in terms of~\cite[Cor.~1.3]{BM23}:
  \begin{quote}
$(\star)$ \ \    The
algebraic de Rham cohomology together with the Gauss-Manin connection
$(\mathcal H_{\rm dR}^j(Y/X),\nabla)$ of a smooth projective family $ Y\to X$ of
complex algebraic varieties has definable flat sections.
\end{quote}

Observe that  the Gauss-Manin local system is defined over $\Z$, so the eigenvalues of the
local monodromies are of absolute value one if and only if they are roots of unity by Kronecker's theorem.

In non-abelian Hodge theory, Simpson has suggested variants of these classical results from
Hodge theory in terms of a logarithmic extension of the moduli space of vector bundles with
relative flat connections
$\mathrm{M}_{\rm dR}(Y/X)\to X$, see~\cite[Sec.~8]{Sim97}. In a forthcoming paper we plan to address the
\begin{question}
Which flat sections of $\mathrm{M}_{\rm dR}(Y/X)\to X$ over open definable subsets of $X$
are definable?
\end{question}

In the non-linear setting of non-abelian Hodge theory it is not clear to us 
whether all flat sections are definable
%whether the answer is always positive
or whether there is any direct connection to Simpson's {regularity result 
for the non-abelian
Gauss-Manin connection \cite[Sec.~8]{Sim97} in the spirit of Theorem~\ref{thm:mainglob}.

\bigskip

 {\em Acknowledgments}. It is a pleasure to acknowledge discussions around the topic of this note  with Benjamin Bakker,  Philip Engel, Bruno Klingler and Salim Tayou, and to thank the referee for a careful reading.

\section{Reminder on linear meromorphic differential equations}\label{sec:remdiffeq}

Let $\rho>0$.
We consider the meromorphic differential equation
\begin{equation}\label{eq:diffeq1}
z Y' = A(z) Y
\end{equation}
on the open  disc $\Delta (\rho)=\{z \in \C, |z|<\rho\}$. Here $ A $ is an $r\times r$-matrix with entries holomorphic
functions on the punctured disc $\Delta (\rho)^\times=\Delta(\rho)\setminus \{0\}$ which
are meromorphic along $0$.  In order to exclude undesirable behavior at the
  boundary of  $\Delta (\rho)$, we assume that $A$ is
overconvergent, i.e.\ that all its entries extend to holomorphic functions on $\Delta(\tilde
\rho)^\times $ for some $\tilde \rho>\rho$.

Recall that a gauge transformation by $P$
changes $A$ to the coefficient matrix $P^{-1} A P - zP^{-1} P'$ in~\eqref{eq:diffeq1}. A pullback along the
finite covering  $z\mapsto
z^m$ for $m\in \mathbb Z_{\ge 1}$ replaces $A(z)$ by $ m A(z^m)$.  We say that $A$ is in
{\it  split
normal form} if
there exist $Q_1(z),\ldots , Q_\ell(z) \in \frac 1 z \C [\frac 1 z ]$ and constant square  matrices
$B_1,\ldots , B_\ell$    of size $r_1,\ldots, r_\ell$ with $\sum_{i=1}^\ell  r_i=r$  such that $A$ is block diagonal with blocks $Q_1 {\rm Id}_{r_1} + B_1 , \ldots ,
Q_\ell {\rm Id}_{r_\ell}+
B_\ell$.
Note that for such a split normal form $A$ a fundamental matrix solution on a sector has the shape
\begin{equation}\label{eq:stareq}
\mathcal Y(z) = \mathrm{Diag}( z^{B_1} e^{\int \frac{Q_1(z)}{z}\, dz}, \ldots , z^{B_\ell} e^{\int \frac{Q_\ell(z)}{z}\, dz} ).
\end{equation}

\begin{thm}[Fabry-Hukuhara-Turrittin-Levelt]
\label{thm:formaldiffeq}
After pullback along a finite covering there exists a formal gauge transformation $P\in
\mathrm{GL}_r\left( \C (\! (z)\! ) \right)$ which transforms the differential equation~\eqref{eq:diffeq1} into a
split normal form.
\end{thm}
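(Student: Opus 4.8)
The plan is to argue by induction, decreasing the rank $r$ together with the irregularity, and peeling off one exponential factor $e^{\int Q_i/z}$ at a time. Throughout I work with the formal differential module over $\C(\!(z)\!)$ attached to $A$ and the derivation $\delta = z\, d/dz$; gauge transformations and pullbacks act as recalled before the statement. The goal is to reach a block decomposition in which each block is $Q_i\,\mathrm{Id}_{r_i}+B_i$, whence the fundamental solution has the shape~\eqref{eq:stareq}.

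First I would normalize the leading behaviour. Writing $A(z) = z^{-s}(A_0 + z A_1 + \cdots)$ with $A_0 \neq 0$, the integer $s \geq 0$ is the Poincaré rank. If $s = 0$ the equation is regular singular, and I reduce to the classical Levelt normalization: iterated polynomial gauge transformations remove all positive-order terms of the coefficient, the only obstruction being resonances (eigenvalues of $A_0$ differing by a positive integer), which are separated by a diagonal shearing $\mathrm{Diag}(z^{k_1},\dots,z^{k_r})$. The outcome is a constant residue matrix $B$, i.e.\ a split normal form with all $Q_i = 0$. This is the base of the induction.

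If $s > 0$, I distinguish according to the structure of the leading matrix $A_0$. When $A_0$ has at least two distinct eigenvalues I invoke the Hukuhara--Turrittin splitting lemma: grouping the eigenvalues into spectral clusters gives a block decomposition $A_0 = A_0' \oplus A_0''$ with disjoint spectra, and I construct a formal gauge transformation $P = \mathrm{Id} + zP_1 + z^2 P_2 + \cdots \in \mathrm{GL}_r(\C[[z]])$ block-diagonalizing the entire system. Comparing coefficients, each $P_k$ is the unique solution of a Sylvester equation $A_0' X - X A_0'' = C_k$ with $C_k$ depending only on $P_1,\dots,P_{k-1}$; the operator $X \mapsto A_0' X - X A_0''$ is invertible precisely because the spectra are disjoint, so the recursion closes formally (there is no convergence issue, everything happening over $\C[[z]]$). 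This splits the module as a direct sum of two systems of strictly smaller rank, and the induction hypothesis finishes this case.

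The remaining --- and genuinely hard --- case is $s > 0$ with $A_0$ having a single eigenvalue $\lambda$, say $A_0 = \lambda\,\mathrm{Id} + N$ with $N$ nilpotent. Tensoring with the rank-one module $e^{\int Q/z}$ for $Q = \lambda z^{-s}$ subtracts $\lambda$ from every eigenvalue, but if $N \neq 0$ the leading term merely becomes nilpotent and the Poincaré rank need not drop, so neither splitting nor twisting alone makes progress. This is exactly where the pullback along $z \mapsto z^m$ in the statement is indispensable: ramifying refines the Newton polygon, turning a nilpotent leading block into one whose slopes are genuinely fractional and, after rescaling, are resolved into distinct eigenvalues or a strictly lower pole order. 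The main obstacle is therefore to produce a numerical invariant --- a suitable combination of rank and pole order in the spirit of Moser's rank reduction --- that strictly decreases under the combination of ramification, twisting and gauge reduction, thereby guaranteeing termination. Verifying that this invariant drops is the technical heart of the argument; once it is in place, the induction assembles the blocks $Q_i\,\mathrm{Id}_{r_i} + B_i$ of the split normal form.
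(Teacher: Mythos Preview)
The paper does not prove this theorem: it is stated in Section~\ref{sec:remdiffeq} as a classical result (attributed to Fabry--Hukuhara--Turrittin--Levelt) and simply used as input, with~\cite{vdPS03} serving as the implicit reference. So there is no ``paper's own proof'' to compare against.

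That said, your outline is the standard strategy one finds in textbook treatments (e.g.\ Wasow, or~\cite[Ch.~3]{vdPS03}): normalize the Poincar\'e rank, handle the regular singular base case via Levelt's reduction to a constant residue, split off blocks formally whenever the leading matrix $A_0$ has at least two eigenvalues (the Sylvester recursion you describe is exactly right), and in the single-eigenvalue nilpotent case twist away the scalar part and ramify. The structure is correct.

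The genuine gap is the one you yourself flag: in the nilpotent-leading-term case you assert that a suitable invariant strictly decreases after ramification and shearing, but you do not produce that invariant or verify the drop. This is not a minor bookkeeping step --- it is the whole content of the theorem in that case. Concretely, after $z\mapsto z^m$ the Poincar\'e rank goes \emph{up} (from $s$ to roughly $ms$), so rank-plus-pole-order alone does not work; one needs either the Katz rank / Newton polygon slopes, or a careful cyclic-vector argument showing that the first nonzero slope becomes integral and the associated leading block acquires distinct eigenvalues. Without making this precise, the induction does not terminate. If you want to complete the argument, the cleanest route is probably to pass to a cyclic vector, read off the slopes from the Newton polygon of the resulting scalar operator, ramify to make the smallest positive slope an integer, and then show that the leading coefficient of the twisted system is no longer nilpotent.
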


By first performing a meromorphic gauge transformation on $A$, we can assume without loss
of generality that the formal gauge transformation $P$ in Theorem~\ref{thm:formaldiffeq}
is in $\mathrm{GL}_r( \C \llbracket z \rrbracket )$ and that $P(0)$ is the identity matrix. Note that the matrix $P$ is itself a solution of
a meromorphic differential equation
\begin{equation}\label{eq:diffeqgauge}
z P' =    A(z) P - P \tilde A(z)
\end{equation}
where $\tilde A$ is in split normal form.

Let $d\in S^1=\{ z\in \C\, |\, | z |=1 \}$ be a direction and
  $$D_d(\rho)=\{x d\, |\, x\in (0,\rho)\} \subset \Delta(\rho)$$
be its associated ray. By a {\it  solution of~\eqref{eq:diffeq1} on the ray} $D_d(\rho)$ we mean a
function $Y\colon D_d(\rho)\to \C^r$ which extends to a holomorphic function on a sector  in $\Delta(\rho)$
containing $D_d(\rho)$ which is a solution of~\eqref{eq:diffeq1} on that sector.

\begin{defn}\label{def:multisum}
We call a function $f\colon D_d(\rho)\to \C$ a `{\it multisum}' if
\begin{itemize}
\item $f$
  is the restriction to   $D_d(\rho)$  of a real
  analytic function defined on  $ D_d(\rho')$ for some $\rho'>\rho$;
 \item for some $0<\tilde\rho \le \rho$ the function $f|_{D_d(\tilde \rho)}$ is of the
form
$$f=f_1|_{D_d(\tilde \rho)} + \cdots + f_n|_{D_d(\tilde\rho)},$$ where $f_j$ is a holomorphic function on a sector $\{ z\in
\Delta (\tilde \rho)^\times \, |\, |\mathrm{arg}(z/d) |< \kappa_j \phi_j \}$ with $\kappa_j\in
(0,1)$, $\phi_j\in (\frac \pi 2 , \pi)$ for which there exists $C>0$ such that
\[
\left| \frac{1}{n!} f_j^{(n)}(z)\right| \le C^n (n!)^{\kappa_j}
\]
for all $n\ge 0$ and  all $z$ in  the above sector;
%this sectorial domain;
\item  the Taylor coefficients   $\lim_{z\to 0}
f_j^{(m)}(z)$ exist for the above $f_j$ for all $m\ge 0$.
\end{itemize}
\end{defn}

The set of all `multisums' is a $\C$-algebra closed under $\frac{d}{dz}$ and a `multisum' is
uniquely characterized by its asymptotic Taylor coefficients at $z=0$.  An important point is that the Taylor series at $z=0$  might be not convergent.

A formal power series in $\C\llbracket z \rrbracket$ is {\it multisummable} in direction $d$ if
it is the Taylor series at $z=0$ of a `multisum' on $D_d(\rho)$ for some $\rho >0$. A
basic reference for multisummable power series is~\cite[Ch.~10]{Bal94}.

Recall that the {\it slopes} of the differential equation \eqref{eq:diffeq1} may be defined as $$\{ \frac{1}{m} {\rm deg}_z  Q_i(\frac{1}{z}), i=1,\ldots, \ell \}$$ where  $m$ is the degree of the necessary covering to achieve the split normal form in Theorem~\ref{thm:formaldiffeq}. See~\cite[Ch.~3]{vdPS03}.

\begin{thm}[Multisummation Theorem]\label{thm:multisum}
  Let $Y\in (\C\llbracket z \rrbracket)^r $ be a formal solution of a differential
  equation~\eqref{eq:diffeq1} all of whose positive slopes are $>\frac 1 2$. Then for all but a finite number of directions $d$,  the
  components of $Y$ are multisummable in the direction $d$.
\end{thm}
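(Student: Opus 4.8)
The plan is to prove that each component of a formal solution $Y$ is multisummable by first reducing to the split normal form supplied by Theorem~\ref{thm:formaldiffeq}, and then invoking the classical multisummability of formal solutions of meromorphic ODEs whose slopes exceed $\tfrac12$. Concretely, I would proceed as follows.

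First I would reduce to the normalized situation. After a meromorphic gauge transformation and a pullback along $z\mapsto z^m$ as discussed after Theorem~\ref{thm:formaldiffeq}, I may assume there is a formal gauge transformation $P\in\mathrm{GL}_r(\C\llbracket z\rrbracket)$ with $P(0)=\mathrm{Id}$ bringing the equation into the split normal form $\tilde A$, so that $P$ itself satisfies~\eqref{eq:diffeqgauge}. The point of this reduction is that the asymptotic/analytic structure of the solutions is governed by the explicit fundamental matrix~\eqref{eq:stareq} twisted by $P$; since a pullback $z\mapsto z^m$ and a meromorphic gauge transformation preserve multisummability of the relevant components (they act by convergent operations on the formal series and scale the slopes in a controlled way), it suffices to establish multisummability after these operations. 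I would note that the hypothesis that all positive slopes are $>\tfrac12$ is exactly the condition $\kappa_j\in(0,1)$, i.e.\ level $k_j=1/\kappa_j<2$, that makes the Borel–Laplace summation of each level a single application rather than forcing an accumulation of small levels.

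The technical heart is the statement that a formal power series solution of such a system is multisummable outside finitely many directions. I would deduce this from the structure theory of meromorphic connections: the formal gauge transformation $P$ is multisummable (this is the Main Asymptotic Existence / Hukuhara–Turrittin theorem combined with the Ramis–Sibuya and Braaksma multisummability results, see~\cite[Ch.~10]{Bal94},~\cite[Ch.~3]{vdPS03}), with the finitely many excluded directions being the anti-Stokes (singular) directions determined by the exponential parts $Q_i$. Because the entries of the split fundamental matrix~\eqref{eq:stareq} are genuine analytic functions on sectors of the required opening, the product $P\cdot\mathcal Y$ yields actual solutions on each such sector, and matching a given formal solution $Y$ against the columns of this fundamental matrix expresses each component of $Y$ as a $\C$-linear combination of multisums. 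Here I would use that the set of multisums is a $\C$-algebra, as recorded after Definition~\ref{def:multisum}, so that these linear combinations are again multisums, and that a multisum is determined by its asymptotic Taylor series, which forces the sum to have $Y$ as its formal expansion.

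The main obstacle I anticipate is bookkeeping the levels and Gevrey estimates so that each summand lands in the sectorial class of Definition~\ref{def:multisum} with the stated bounds $|f_j^{(n)}(z)/n!|\le C^n(n!)^{\kappa_j}$ for $\kappa_j\in(0,1)$. The slopes of the system give the Gevrey orders of the formal solution, and the condition that all positive slopes exceed $\tfrac12$ guarantees $\kappa_j<1$; one must check that after the gauge transformation and the pullback the surviving levels still satisfy this, and that the Borel–Laplace multisummation process in the finitely-many admissible directions produces functions with precisely these sectorial Gevrey estimates rather than merely asymptotic expansions. Verifying that the excluded directions are finite in number reduces to the finiteness of the Stokes directions attached to the finitely many $Q_i$, which is immediate. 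I would therefore present the proof as the reduction to split normal form together with a citation to the Braaksma–Balser multisummability theorem, checking only the slope/level bookkeeping explicitly.
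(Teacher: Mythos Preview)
The paper does not prove this theorem: it is quoted as a known result and the reader is simply referred to \cite[Ch.~7]{vdPS03} for a proof. So there is no ``paper's own proof'' to compare against; your sketch is essentially the standard route (reduction to split normal form, then multisummability of the formal gauge transformation \`a la Braaksma--Ramis--Sibuya), which is indeed what one finds in the cited references.

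One technical point to watch in your outline: you freely pull back along $z\mapsto z^m$ and then assert that multisummability of the pulled-back series yields multisummability of the original $Y$. The forward direction (pullback preserves multisummability) is straightforward, but descending back is not automatic from Definition~\ref{def:multisum}, since composing a `multisum' in $w$ with $w=z^{1/m}$ does not obviously produce a function satisfying the sectorial Gevrey bounds in the variable $z$. In the standard treatments this is handled either by working with the multi-level summation formalism from the start (so no ramification is needed once the slopes are already $>\tfrac12$), or by checking that the summed function, being a genuine solution on a sector, inherits the correct Gevrey asymptotics from the original equation in $z$. Your identification of ``slopes $>\tfrac12$'' with ``$\kappa_j\in(0,1)$'' also deserves a line of justification, since the precise correspondence between slopes, levels, and the Gevrey exponent $\kappa_j$ in Definition~\ref{def:multisum} depends on conventions. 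None of this is a real obstruction, but if you want a self-contained argument rather than a citation, these are the places where the bookkeeping must be made explicit.
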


For a proof of the Multisummation Theorem see for instance~\cite[Ch.~7]{vdPS03}.

\begin{rmks}
  \begin{itemize}
    \item The condition on the slopes in Theorem~\ref{thm:multisum} is satisfied after
      pullback along a finite covering $z\mapsto z^m$ for  large  $m$.
    \item
Note that the uniqueness of the `multisum' with given aymptotic Taylor expansion implies that the associated `multisums' in
Theorem~\ref{thm:multisum} automatically satisfy the differential
  equation~\eqref{eq:diffeq1} and can be defined on $D_d(\rho)$ for the given $\rho$
  in~\eqref{eq:diffeq1}.   \end{itemize}
\end{rmks}

Combining Theorem~\ref{thm:formaldiffeq} for the differential equation~\eqref{eq:diffeq1} and
Theorem~\ref{thm:multisum} for the differential equation~\eqref{eq:diffeqgauge} with the above
remarks we deduce:

\begin{cor}\label{cor:irrsingdiffeq}
  After taking the pullback along a finite covering $z\mapsto z^m$ for some $m$  and performing a meromorphic
  gauge transformation,
  there exist  $Q_1(z),\ldots , Q_L(z) \in \frac 1 z \C [\frac 1 z ]$ and constant Jordan blocks
  $B_1,\ldots , B_L$ with eigenvalues $b_1,\ldots , b_L$ such that, except for a
  finite number of directions $d$, there
  exists $P_d\colon D_d(\rho)\to \mathrm{GL}_r(\C)$ with  entries being `multisums', with
  $P_d(0)$ the identity matrix, such that
  \[
\mathcal Y\colon  D_d(\rho) \to   \mathrm{GL}_r(\C),\quad  z\mapsto P_d(z) \, \mathrm{Diag}\left(z^{B_1}  e^{\int \frac{Q_1(z)}{z}\, dz}, \ldots , z^{B_L}  e^{\int \frac{Q_L(z)}{z}\, dz} \right)
\]
is a fundamental matrix solution of the differential
  equation~\eqref{eq:diffeq1} on   $D_d(\rho)$.

The differential equation is regular singular  if and only if  $Q_1(z),\ldots , Q_L(z)$ all
vanish. In that case \[ \exp(2\pi i b_1),\ldots,  \exp(2\pi i b_L)\] are the eigenvalues
of the monodromy.
\end{cor}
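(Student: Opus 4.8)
The plan is to prove the two assertions in turn: first the existence of the stated fundamental matrix solution, which is essentially an assembly of Theorems~\ref{thm:formaldiffeq} and~\ref{thm:multisum}, and then the regular singular characterization together with the monodromy formula, where the substance lies in a growth analysis of the building blocks appearing in~\eqref{eq:stareq}. For the existence I would first apply Theorem~\ref{thm:formaldiffeq} to~\eqref{eq:diffeq1}: after a pullback $z\mapsto z^m$ and, as explained after that theorem, an initial meromorphic gauge transformation, there is a formal $P\in\mathrm{GL}_r(\C\llbracket z\rrbracket)$ with $P(0)=\mathrm{Id}$ solving the gauge equation~\eqref{eq:diffeqgauge}, where $\tilde A$ is in split normal form with data $Q_1,\ldots,Q_\ell$ and $B_1,\ldots,B_\ell$. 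Conjugating each $B_i$ into Jordan form by a constant matrix refines the blocks into Jordan blocks $B_1,\ldots,B_L$ with eigenvalues $b_1,\ldots,b_L$ (repeating the corresponding $Q_i$), giving the data in the statement. Enlarging $m$ if necessary, the positive slopes of~\eqref{eq:diffeqgauge} exceed $\tfrac12$ by the first remark after Theorem~\ref{thm:multisum}, so that theorem applies to the formal solution $P$: for all but finitely many $d$ its entries are multisummable, yielding $P_d\colon D_d(\rho)\to\mathrm{GL}_r(\C)$ with `multisum' entries. Since a `multisum' is determined by its asymptotic Taylor coefficients, $P_d(0)=P(0)=\mathrm{Id}$, so $P_d$ is invertible near $0$, and by the second remark after Theorem~\ref{thm:multisum} it genuinely solves~\eqref{eq:diffeqgauge}. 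As $\mathcal Y_{\mathrm{split}}=\mathrm{Diag}(z^{B_i}e^{\int Q_i/z\,dz})$ solves $zY'=\tilde A Y$ and $P_d$ intertwines $\tilde A$ with $A$ via~\eqref{eq:diffeqgauge}, the product $\mathcal Y=P_d\,\mathcal Y_{\mathrm{split}}$ solves~\eqref{eq:diffeq1} and is fundamental because both factors are invertible.

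For the characterization I would argue through the growth of solutions, using Deligne's criterion that~\eqref{eq:diffeq1} is regular singular if and only if every solution has moderate growth in every sector, a property preserved under the meromorphic gauge transformation and under the pullback $z\mapsto z^m$ and its inverse performed above. On a sector each $z^{B_i}$ grows moderately (it is $z^{b_i}$ times a polynomial in $\log z$), while $P_d$ is bounded since $P_d(0)=\mathrm{Id}$. Hence, if all $Q_i$ vanish, then $\mathcal Y=P_d\,\mathrm{Diag}(z^{B_i})$ and all solutions grow moderately on each $D_d$; the finitely many exceptional directions are covered by overlapping sectors of positive opening, so moderate growth holds on a full punctured disc and~\eqref{eq:diffeq1} is regular singular. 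Conversely, if some $Q_j\neq 0$ then $\int Q_j/z\,dz$ has a pole at $0$, so $\mathrm{Re}\!\int Q_j/z\,dz\to+\infty$ on an open set of directions; choosing a non-exceptional $d$ there, the corresponding columns of $\mathcal Y$ grow exponentially, contradicting moderate growth, and~\eqref{eq:diffeq1} is not regular singular. Equivalently, one may invoke the classical equivalence of regular singularity with the vanishing of all slopes, which by the definition of slopes recalled above means exactly $Q_1=\cdots=Q_L=0$.

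For the monodromy in the regular singular case I would use that when all $Q_i$ vanish there is no Stokes phenomenon, so the $P_d$ for varying $d$ agree and glue to a single-valued holomorphic $P$ on $\Delta(\rho)^\times$, bounded with $P(0)=\mathrm{Id}$, hence extending holomorphically across $0$; equivalently, the formal gauge transformation is already convergent for a regular singular equation. Writing $B=\mathrm{Diag}(B_1,\ldots,B_L)$, the fundamental solution is $\mathcal Y=P\,z^{B}$, and analytic continuation along a counterclockwise loop fixes the single-valued $P$ while sending $z^{B}=\exp(B\log z)$ to $z^{B}\exp(2\pi i B)$; thus $\mathcal Y\mapsto\mathcal Y\exp(2\pi i B)$ and the monodromy is $\exp(2\pi i B)$, with eigenvalues $\exp(2\pi i b_1),\ldots,\exp(2\pi i b_L)$. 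Here `the monodromy' is that of the equation after the pullback $z\mapsto z^m$; the eigenvalues of the original equation are $m$-th roots of these, hence have absolute value $1$ exactly when the $\exp(2\pi i b_k)$ do, which is what matters for Theorem~\ref{thm:mainglob}.

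I expect the main obstacle to be the regular singular characterization: matching the ray-wise `multisum' description of $\mathcal Y$ to Deligne's sectorial moderate-growth definition, and in particular handling the finitely many exceptional directions carefully — both for patching moderate growth across them in the easy direction and for locating a good direction of exponential growth in the converse — together with justifying the gluing of the $P_d$ and the convergence of $P$ that make the monodromy computation clean.
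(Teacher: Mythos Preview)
Your proof is correct and follows essentially the same approach as the paper: the existence part assembles Theorems~\ref{thm:formaldiffeq} and~\ref{thm:multisum} exactly as the paper sketches before the corollary (including the constant conjugation to Jordan blocks, which the paper notes after the statement), and for the final part the paper simply cites~\cite[Thm.~II.1.17]{Del70}, whose content you have unpacked via the moderate-growth criterion and the convergence of the formal gauge in the regular singular case. One minor point: you only argue that $P_d$ is invertible near $0$, but since $P_d$ solves the linear equation~\eqref{eq:diffeqgauge} its determinant satisfies a first-order scalar ODE and hence is nowhere zero on $D_d(\rho)$ once it is nonzero at $0$.
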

Note here there is a change of notation as compared to~\eqref{eq:stareq},   $\ell$ becomes $L$. This is because we request the $B_i$ to be Jordan blocks, which is achieved after a constant gauge transformation starting from the $B_i$  and  $ Q_i(z)$ as in~\eqref{eq:stareq}.  So in fact $L\ge \ell$.

The last part of the corollary follows from~\cite[Thm.~II.1.17]{Del70}.

\section{Reminder on o-minimal structures}\label{sec:remomin}

Recall that an o-minimal structure $(\mathcal A_n)_n$ consists of a  set of subsets $A_n$
of $\mathbb R^n$ which satisfy certain properties, in particular they comprise the
semi-algebraic sets and $\mathcal A_1$ consists of the finite unions of intervals.  In this
note if not explicitly mentioned otherwise we use the o-minimal structure $\mathbb R_{\rm an,exp}$ which is the structure generated by
the semi-algebraic sets, the graphs of real analytic functions on $[0,1]^n$ and the graph
of $\exp\colon \mathbb R\to \mathbb R$.
The key property of o-minimal structures that we use is that a real analytic definable function $f\colon
(0,1)\to \mathbb R$ which is not identically zero has only a finite number of zeros. A
basic reference for o-minimal structures is~\cite{vdD98}.

Recall that a complex algebraic variety $X$ has a canonical
$\mathbb R_{\rm an,exp}$-structure (in fact a semi-algebraic structure),
see~\cite{BBT23}.

By an {\it overconvergent flat bundle $(\mathcal E,\nabla)$ on
$U(\rho)=(\Delta(\rho)^\times)^n \times \Delta(\rho)^{d-n}$ meromorphic along }
$D(\rho)=\Delta(\rho)^d\setminus U(\rho)$ we mean  the  restriction to $U(\rho)$ of  a
flat bundle on $U(\tilde \rho)$ for some
$\tilde \rho>\rho$ with a meromorphic structure along $D(\tilde \rho)$ in the sense
of~\cite[II.2.13]{Del70},  i.e.\ it is provided with a prolongation to a coherent sheaf of $\mathcal
M_{U(\tilde \rho),D(\tilde \rho)}$-modules with flat connection, where $\mathcal M_{U(\tilde \rho),D(\tilde \rho)}$
is the sheaf of rings of  holomorphic functions on $U(\tilde \rho)$ which are meromorphic  along $D(\tilde \rho)$.
 Such a $\mathcal E$
has a canonical definable structure (\cite[Intro.]{BM23}).

In a key step in our argument we use an o-minimal expansion $\mathbb R_{\rm sum,exp}$ of the structure $\mathbb R_{\rm
  an,exp}$ to a structure comprising the graphs of the `multisums' of
Definition~\ref{def:multisum}. The o-minimality of this structure is shown
in~\cite{vdDS00}, where also `multisums' in several variables are discussed.

\medskip

The elementary technical observation about oscillating functions that we need in the proof
of our main theorem is the
following. For a complex number $z$ we write $z={\rm Re}(z)+i {\rm Im}(z)$ with $ {\rm Re}(z), {\rm Im}(z)\in \R$.

\begin{lem}\label{lem:non-defin}
Consider  $Q(z)=q_k z^{-k} + q_{k-1} z^{-k+1} + \cdots + q_1z^{-1} \in \frac 1 z \C[ \frac 1 z ] $, $b\in
\C$, $\rho>0$ and $d\in S^1$. If  the function
\[
f\colon D_d(\rho) \to \C,\quad z\mapsto \exp( i\, \mathrm{Im}(Q(z) + b \log z))
\]
is definable, where $\log z$ is some branch of the logarithm, then $Q( D_d(\rho) )\subset \R$ and
$b\in \R$. In particular, if $q_k\ne 0$ then $\mathrm{Im}(q_k d^{-k})=0$.
\end{lem}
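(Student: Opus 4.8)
The plan is to reduce everything to a statement about one real variable along the ray and then invoke the finiteness-of-zeros property of o-minimal structures quoted above. First I would parametrize $D_d(\rho)$ by $x\in(0,\rho)$ via $z=xd$. Writing $c_j:=q_j d^{-j}$ and fixing the branch value $\theta\in\R$ of the argument along the ray, so that $\log z=\log x+i\theta$ on $D_d(\rho)$, a direct computation gives
\[
g(x):=\mathrm{Im}\bigl(Q(xd)+b\log(xd)\bigr)=\sum_{j=1}^{k}\mathrm{Im}(c_j)\,x^{-j}+\mathrm{Im}(b)\,\log x+\mathrm{Re}(b)\,\theta,
\]
so that $f(xd)=\exp(i\,g(x))=\cos g(x)+i\sin g(x)$.

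Next I would exploit the hypothesis that $f$ is definable. Identifying $\C$ with $\R^2$, the reparametrization $x\mapsto xd$ is semi-algebraic and the imaginary-part projection is linear, hence $x\mapsto\sin g(x)$ is a definable function on $(0,\rho)$; it is moreover real analytic there, being the composition of $\sin$ with the real analytic function $g$. The only property of the ambient o-minimal structure I will use is that such a function, if nonzero, has finitely many zeros.

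The heart of the argument is to show that $g$ is constant. Suppose not, i.e.\ that some $\mathrm{Im}(c_j)$ or $\mathrm{Im}(b)$ is nonzero. Then the behaviour of $g$ as $x\to 0^+$ is governed by its dominant term: if some $\mathrm{Im}(c_j)\ne 0$, the term with the largest such index blows up like $x^{-j}$, whereas if all $\mathrm{Im}(c_j)=0$ but $\mathrm{Im}(b)\ne 0$, the term $\mathrm{Im}(b)\log x$ diverges. In either case $g(x)\to\pm\infty$ as $x\to 0^+$, so, $g$ being continuous on $(0,\rho)$, the intermediate value theorem shows that $g$ attains every sufficiently large multiple of $\pi$ at points accumulating at $0$. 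Consequently $\sin g(x)$ has infinitely many zeros in every neighbourhood of $0$, while being not identically zero since $g$ is non-constant. This contradicts the finiteness-of-zeros property. I expect this step, the observation that a nonzero imaginary part genuinely forces oscillation near the singularity which o-minimality forbids, to be the conceptual crux, though the underlying estimates are elementary.

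Finally, knowing that $g$ is constant, the linear independence of $1,x^{-1},\ldots,x^{-k},\log x$ on $(0,\rho)$ forces $\mathrm{Im}(c_j)=0$ for all $j$ and $\mathrm{Im}(b)=0$. The first statement says precisely that $Q(xd)=\sum_{j}c_j x^{-j}$ is real for every $x\in(0,\rho)$, i.e.\ $Q(D_d(\rho))\subset\R$, and the second gives $b\in\R$; specializing the vanishing to $j=k$ yields $\mathrm{Im}(q_k d^{-k})=0$ when $q_k\ne 0$.
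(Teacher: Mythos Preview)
Your proof is correct and follows essentially the same approach as the paper: both argue that a nonzero imaginary part forces $\mathrm{Im}(Q(z)+b\log z)$ to diverge along the ray, whence the intermediate value theorem produces infinitely many zeros of a definable real-analytic function, contradicting o-minimality. The only cosmetic difference is that the paper handles $Q$ and $b$ in two separate steps while you treat them simultaneously via the linear independence of $1,x^{-1},\ldots,x^{-k},\log x$.
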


\begin{proof}

  If $Q(z)\notin \R$ for some $z\in  D_d(\rho) $ then
  $$
  \lim_{\stackrel{z\in D_d(\rho)}{z\to 0}} \left| \mathrm{ Im} \ Q(z) \right| =\infty $$
  and  grows faster  than $|{\rm log}(z)|$.
  Thus
  \[
\lim_{\stackrel{z\in D_d(\rho)}{z\to 0}} \left| \mathrm{Im}(Q(z) + b \log z)
\right| =\infty .
\]
As the real valued function  $ \mathrm{Im}(Q(z) + b \log z)$ is continuous, by the intermediate function theorem it must
take all the values $2\pi M$ for $M \ge M_0, M\in \Z$ or  $M\le M_0, M\in \Z$ for some $M_0$.
 We conclude that  $f(z)=1$ for infinitely many   $z\in
D_d(\rho) $, thus $f$ oscillates.  This contradicts definability.

Thus $Q(z)\in \R$ and in the formula for $f$ we can then omit $Q$ and the same argument shows that in view of
\[ \lim_{\stackrel{z\in D_d(\rho)}{z\to 0}} |\mathrm{Re}(\log z)|=\infty\] and
$\mathrm{Im}(\log z)$ constant the imaginary part
of $b$ has to vanish.
\end{proof}

In the opposite direction we use the following simple observation.

\begin{lem}\label{lem:defzB}
Let $B$ be a complex $n\times n$-matrix with real eigenvalues. Then on any open  sector
$S\subset\C^\times$ any holomorphic branch of the function $z\mapsto z^B$ is definable.
\end{lem}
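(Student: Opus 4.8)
The plan is to reduce to a single Jordan block and then to verify definability ingredient by ingredient after passing to polar coordinates. Fix a holomorphic branch of $\log$ on the simply connected set $S$, so that $z^B=\exp(B\log z)$; any two branches differ by a constant factor $\exp(2\pi i k B)$ for some $k\in\Z$, and since multiplication by a fixed matrix is semi-algebraic on the entries, it is enough to treat one branch. First I would write $B=TJT^{-1}$ with $J$ in Jordan form and $T\in\GL_n(\C)$ constant. As $z^B=Tz^JT^{-1}$ and multiplication by the fixed matrices $T,T^{-1}$ is a definable operation, it suffices to treat $z^J$, and then, block by block, a single Jordan block $B=\lambda\,\mathrm{Id}+N$, where $\lambda\in\R$ is the (real) eigenvalue and $N$ is nilpotent. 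Since $\lambda\,\mathrm{Id}$ and $N$ commute, I would factor
\[
z^B=\exp(\lambda\log z)\,\exp(N\log z)=z^\lambda\,\exp(N\log z).
\]

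The nilpotent factor is the easy one. As $N^n=0$, we have $\exp(N\log z)=\sum_{k=0}^{n-1}\frac{1}{k!}N^k(\log z)^k$, a matrix whose entries are polynomials in $\log z$; hence it suffices that $\log z$ be definable on $S$. Writing $z=re^{i\theta}$ with $r=|z|$, the real part $\mathrm{Re}(\log z)=\log r$ is definable because $|z|$ is semi-algebraic and $\log$ is definable in $\mathbb R_{\exp}$ (its graph is that of $\exp$ with the two coordinates swapped), while the imaginary part $\mathrm{Im}(\log z)=\theta=\arg z$ is definable because on the sector $\theta$ ranges over a bounded interval, on which it is recovered from $(\cos\theta,\sin\theta)=(x/r,y/r)$ as the inverse of a restricted analytic injection.

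It remains to treat the scalar $z^\lambda=\exp(\lambda\log z)=r^\lambda\bigl(\cos(\lambda\theta)+i\sin(\lambda\theta)\bigr)$. Here $r^\lambda=\exp(\lambda\log r)$ is definable precisely because $\lambda$ is \emph{real}: then $\lambda\log r$ is a definable real-valued function and $\exp$ belongs to the structure. The factors $\cos(\lambda\theta)$ and $\sin(\lambda\theta)$ are definable because $\lambda\theta$ stays in a bounded interval, on which $\cos$ and $\sin$ are restricted analytic. Assembling these three ingredients, every entry of $z^B$ is a definable function of $(\mathrm{Re}\,z,\mathrm{Im}\,z)$, proving the lemma.

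The argument is elementary, so there is no serious obstacle; the only point demanding care is the role of the reality of the eigenvalue. Had $\lambda$ had nonzero imaginary part, the factor $\exp\bigl(i\,\mathrm{Im}(\lambda)\log r\bigr)$ would oscillate infinitely often as $r\to0$ and destroy definability, exactly the phenomenon isolated in Lemma~\ref{lem:non-defin}; reality of all eigenvalues is what removes this oscillation. A secondary, purely technical point is to confirm that $\arg$ is definable on the sector, which follows from its boundedness there.
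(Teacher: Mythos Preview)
Your proof is correct and follows essentially the same route as the paper's: conjugate so as to separate a real diagonal part from a commuting nilpotent part, factor $z^B$ accordingly, and then check definability of each factor using that $\log z=\log|z|+i\,\arg z$ is definable on a sector and that the real exponential lies in the structure. The only cosmetic difference is that the paper phrases the reduction via the Jordan--Chevalley decomposition $B=D+N$ (with $D$ real diagonal after conjugation) rather than block-by-block via Jordan normal form, but the content is identical.
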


\begin{proof}
 Up to replacing $B$ by a conjugate matrix and correspondingly $z^B$ by the same conjugate, the Jordan--Chevalley decomposition tells us
 that $B = D+N$, where $DN=ND$, $D$ is real and diagonal and $N $ is nilpotent. Then
 \[
   z^B= \exp(D \log z) \exp(N \log z),
 \]
 where the factor $\exp(N \log z)$ has entries
 which are polynomials in $\log z= \log |z| + i \, \mathrm{arg}(z)$ and where  $ \exp(D
 \log z)$ can be expressed in terms of the real exponential and logarithm functions and an analytic
 function in $ \mathrm{arg}(z)$.
 Thus  $z^B$ is
  definable.
\end{proof}

\section{Reduction to sectors}\label{sec:reducsec}

In this section we reduce the proof of Theorem~\ref{thm:mainglob} to a local
statement. Based on this reduction we then recall the proof of the `if' part due to
Bakker--Mullane~\cite{BM23}. For the `only if' part we reduce further to a local statement in one variable.

\smallskip

Let $\overline X$ be a smooth compactification of $X$ with $\overline X\setminus X$ a
normal crossings divisor. Let $\phi\colon \Delta(\rho)^d\to \overline X$ be a holomorphic chart
with $\phi^{-1}(X)= (\Delta(\rho)^\times)^n\times \Delta(\rho)^{d-n}$. Assume that $\phi$ is
overconvergent, i.e.\ extends to a holomorphic map defined on $\Delta(\rho+\epsilon)^d$ for some
$\epsilon >0$.
Let $ S=
S_1\times\cdots \times S_n \times \Delta(\rho)^{n-d}$ be an open, simply connected polysector in
$\Delta(\rho)^d$.

\begin{lem}\label{lem:redpolysec}
The algebraic flat bundle $(\mathcal E,\nabla)$ has definable flat sections
if and only if  for any polysector $S$ as above, the flat sections of $\mathcal E|_S$ are definable.
\end{lem}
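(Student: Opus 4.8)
The plan is to prove the two implications separately, the forward one being essentially a matter of unwinding definitions and the reverse one carrying the substance. For the \emph{only if} direction, assume $(\mathcal E,\nabla)$ has definable flat sections in the global sense and fix a chart $\phi$ and a polysector $S$ as in the statement. Since each sector $S_i$ avoids $0$, we have $S\subset(\Delta(\rho)^\times)^n\times\Delta(\rho)^{d-n}=\phi^{-1}(X)$, and because $\phi$ is overconvergent it is real analytic on the compact polydisc $\overline{\Delta(\rho)^d}$ and hence definable in $\mathbb R_{\rm an}$; thus $\phi(S)\subset X$ is a definable open subset and $\phi$ restricts to a definable biholomorphism $S\xrightarrow{\sim}\phi(S)$ with definable inverse. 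Any flat section of $\mathcal E|_S$ then corresponds under $\phi$ to a flat section of $\mathcal E$ on the definable open $\phi(S)$, which is definable by hypothesis; since pulling back along the definable map $\phi$ preserves definability, the flat sections of $\mathcal E|_S$ are definable.

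For the \emph{if} direction I would first produce a finite definable cover of $X$ by images of simply connected polysectors. By compactness of $\overline X$ one chooses finitely many overconvergent charts $\phi_\alpha\colon\Delta(\rho)^d\to\overline X$ whose images cover $\overline X$. In each chart the punctured polydisc $(\Delta(\rho)^\times)^n\times\Delta(\rho)^{d-n}$ is covered by finitely many simply connected polysectors $S$, obtained by covering each factor $\Delta(\rho)^\times$ by finitely many overlapping sectors of angular width $<2\pi$ and leaving the remaining $d-n$ disc factors untouched. Transporting through the $\phi_\alpha$ yields finitely many definable open subsets $V_\beta=\phi_\alpha(S)\subset X$ that cover $X$, each simply connected, and on each of which every flat section of $\mathcal E$ is definable by the local hypothesis.

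Now let $U\subset X$ be an arbitrary definable open subset and $\sigma\colon U\to\mathcal E$ a flat section. Since $U=\bigcup_\beta(U\cap V_\beta)$ is a finite cover and definable sets are closed under finite union, it suffices to show that the part of the graph of $\sigma$ lying over each $U\cap V_\beta$ is definable. By o-minimality the definable set $U\cap V_\beta$ has only finitely many connected components $W$, each itself definable. On a fixed component $W$, flatness of $\sigma$ together with the simple connectivity of $V_\beta$ (hence trivial monodromy) shows that $\sigma|_W$ extends uniquely by parallel transport to a flat section $\tilde\sigma$ of $\mathcal E|_{V_\beta}$; this $\tilde\sigma$ is definable by the local hypothesis, so the part of the graph of $\sigma$ lying over $W$, being the intersection of the graph of $\tilde\sigma$ with the preimage of $W$ under the projection $\mathcal E\to X$, is definable. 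Taking the union over the finitely many $W$ and then over the finitely many $\beta$ shows that $\sigma$ is definable.

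The main obstacle is exactly the gluing step of the reverse direction, and it is here that o-minimality is indispensable: an arbitrary definable $U$ has no reason to meet each $V_\beta$ in a connected, let alone simply connected, set, so one cannot apply the local hypothesis to $\sigma|_{U\cap V_\beta}$ directly. The finiteness of the number of connected components of the definable set $U\cap V_\beta$, together with their definability, is what rescues the argument, reducing matters to the simply connected pieces on which flat sections extend and are controlled by the hypothesis. The remaining points---definability of the overconvergent charts and the single-valuedness of parallel transport on a simply connected domain---are routine.
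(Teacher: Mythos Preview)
Your proof is correct and follows essentially the same route as the paper's: cover $\overline X$ by finitely many overconvergent charts and polysectors, use o-minimality to ensure each $U\cap V_\beta$ has finitely many definable connected components, extend the flat section from each component to the ambient simply connected polysector, and restrict back. The paper's version is terser---it dismisses the forward implication in one clause and covers only $\overline U$ rather than all of $\overline X$---but the argument is the same in substance.
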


\begin{proof}
We only have to show ``$\Leftarrow$''. Let $U\subset X$ be an arbitrary definable open
subset and $\sigma\colon U\to \mathcal E|_U$ a flat section. Let $\overline U\subset\overline X$ be the  closure of $U$. Then there are finitely
many charts $\phi$ as above covering $\overline U$ and finitely many polysectors  $S$ in these
charts which  cover $U$. By definability  of $U$, for any such polysector $S$,  the number of connected
components of $U\cap S$ is finite~\cite[Sec.~2.2]{vdD98}. Let $S^\circ$ be such a connected component. We can
extend the flat section $\sigma|_{S^\circ}$ to a flat section of $\mathcal E|_S$ which is
definable by assumption. So also $\sigma|_{S^\circ}$ is definable. 
 The definability of $\sigma$  on each component $S^\circ$ of $U\cap S$ implies the definability of 
 $\sigma|_{U\cap S}$. 
\end{proof}

\medskip

In the following $(\mathcal E,\nabla)$ denotes an overconvergent flat bundle on
$U=(\Delta(\rho)^\times)^n \times \Delta(\rho)^{d-n}$ which is meromorphic along
$D=\Delta(\rho)^d\setminus U$ with its canonical o-minimal structure, see Section~\ref{sec:remomin}.
Let $ S=
S_1\times\cdots \times S_n \times \Delta^{d-n}$ be an open, simply connected polysector in $\Delta(\rho)^d$ and let $\mathcal Y=(Y_1, \ldots , Y_r)$ be a basis of
the flat sections over $S$. Let $\mathcal Y M_j$ with $1\le j\le n$ and with $ M_i\in
\mathrm{GL}_r(\C)$ be the analytic continuation of $\mathcal Y$
along a simple loop around the $j$-th punctured disc. The $M_1,\ldots , M_n$ are
commuting monodromy matrices.

\begin{thm} \label{thm:main}
The flat sections $\mathcal Y$ are definable for any $S$ as above  if and only if  the flat
connection $\nabla$ is regular singular and all eigenvalues of $M_1,\ldots , M_n$ are unitary.
\end{thm}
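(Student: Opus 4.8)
The plan is to prove both implications by reducing to a single punctured variable and then feeding in the structure theory of Section~\ref{sec:remdiffeq} together with the oscillation Lemma~\ref{lem:non-defin}. First I would isolate the essential case $n=1$ (a single punctured disc; the unpunctured factors $\Delta^{d-n}$ are harmless, since the bundle and its flat sections extend holomorphically across them), and afterwards bootstrap to arbitrary $n$ by restricting to transverse discs. The direction requiring real work is \emph{only if}: definability of $\mathcal Y$ must force $\nabla$ regular singular with unitary monodromy.

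For the one-variable \emph{only if} statement, start from a definable fundamental solution $\mathcal Y$ and bring the equation into the shape of Corollary~\ref{cor:irrsingdiffeq}. The pullback $z\mapsto z^m$ is algebraic and the normalizing gauge transformation is meromorphic and overconvergent, hence both are $\R_{\rm an,exp}$-definable; therefore the transformed fundamental solution is still definable, and it differs from $P_d\cdot \mathrm{Diag}(z^{B_1}e^{\int Q_1/z\,dz},\ldots,z^{B_L}e^{\int Q_L/z\,dz})$ only by a constant matrix. Now pass to the richer o-minimal structure $\R_{\rm sum,exp}$, in which the multisum matrix $P_d$ (available for all but finitely many directions $d$) is definable; since $P_d(0)=\mathrm{Id}$, Cramer's rule makes $P_d^{-1}$ definable there as well, so each diagonal block $z^{B_i}e^{\int Q_i/z\,dz}$ is $\R_{\rm sum,exp}$-definable. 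Taking a diagonal entry of the $i$-th block yields the definable scalar $z^{b_i}e^{\widetilde Q_i(z)}$, where $\widetilde Q_i=\int Q_i/z\,dz\in\frac1z\C[\frac1z]$. Its modulus $\exp(\mathrm{Re}(b_i\log z+\widetilde Q_i(z)))$ is built from $\log|z|$, $\arg z$ and the semialgebraic functions $\mathrm{Re}(z^{-j})$, hence is definable; dividing it out isolates the phase $\exp(i\,\mathrm{Im}(\widetilde Q_i(z)+b_i\log z))$. Lemma~\ref{lem:non-defin} then forces $\widetilde Q_i$ to be real on the ray $D_d(\rho)$ and $b_i\in\R$; reality on a ray makes each coefficient $\tilde q_j d^{-j}$ real, and letting $d$ range over the arc of directions contained in $S$ forces every coefficient of $\widetilde Q_i$ to vanish, so $\widetilde Q_i=0$ and hence $Q_i=0$. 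Thus the equation is regular singular, and because $b_i\in\R$ the monodromy eigenvalues $\exp(2\pi i b_i)$ of Corollary~\ref{cor:irrsingdiffeq} are unitary.

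To pass from one variable to general $n$, restrict to a transverse disc fixing all coordinates but $z_j$ at generic values: this is restriction to a definable subset, so definable flat sections stay definable, and the case $n=1$ just proved shows the restricted connection is regular singular at $z_j=0$ with $M_j$ having unitary eigenvalues. Regular singularity of $(\mathcal E,\nabla)$ along each component of $D$ follows from regular singularity on generic transverse discs by Deligne's characterization of regular singularity via restriction to curves \cite[Def.~4.5]{Del70}, and unitarity of all $M_1,\ldots,M_n$ is then immediate. For the \emph{if} direction I would follow Bakker--Mullane \cite{BM23}: regular singularity provides a Deligne normal form in which the fundamental solution is $\prod_j z_j^{R_j}$ times an overconvergent, hence definable, invertible holomorphic matrix, where the residues $R_j$ commute; the unitarity hypothesis allows choosing them with real eigenvalues, so Lemma~\ref{lem:defzB} makes each $z_j^{R_j}$, and therefore $\mathcal Y$, definable.

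The delicate point is the simultaneous use of the two o-minimal structures: the hypothesis supplies only $\R_{\rm an,exp}$-definability, whereas the explicit solution genuinely involves the multisum gauge $P_d$, definable solely in the larger $\R_{\rm sum,exp}$. The crux is that every operation needed to strip $\mathcal Y$ down to a single oscillatory phase — the algebraic pullback, the meromorphic gauge, the inversion of $P_d$, and the removal of the definable modulus — stays inside $\R_{\rm sum,exp}$, so that o-minimality of $\R_{\rm sum,exp}$ (absence of oscillation) can be invoked through Lemma~\ref{lem:non-defin}. Verifying carefully that these transformations preserve definability and that the modulus is definable, so the oscillatory phase is cleanly exposed, is where I expect the real difficulty to lie.
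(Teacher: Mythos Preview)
Your proposal is correct and follows essentially the same route as the paper: the ``if'' direction reproduces the Bakker--Mullane argument via $\mathcal Y\cdot\prod_j z_j^{-B_j}$ and Lemma~\ref{lem:defzB}, while the ``only if'' direction reduces to a single punctured variable by restriction to a generic transverse disc (the paper makes this precise with a Baire/meagre argument) and then, exactly as in Proposition~\ref{prop:maindimone}, passes to $\R_{\rm sum,exp}$ to invert $P_d$ and isolate the phase $\exp(i\,\mathrm{Im}(\widetilde Q_i+b_i\log z))$, which Lemma~\ref{lem:non-defin} kills. Your variant of the endgame---varying $d$ over an arc to force all coefficients of $\widetilde Q_i$ to vanish---is a harmless rephrasing of the paper's choice of a single $d$ with $\mathrm{Im}(q_kd^{-k})\ne 0$, and your identification of the simultaneous use of the two o-minimal structures as the crux is exactly right.
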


\begin{proof}
  ``$\Leftarrow$'' (Bakker--Mullane)
By assumption the eigenvalues of $M_1,\ldots , M_n$ are unitary. Let $B_1,\ldots , B_n$ be commuting  complex matrices with real
eigenvalues and
$M_j=e^{2\pi i B_j}$. Then  the entries of
\[
  \tilde{\mathcal{Y}}(z)= \mathcal Y(z)  z_1^{-B_1} \cdots z_n^{-B_n}
\]
are single valued holomorphic functions on $ (\Delta(\rho)^\times)^n \times \Delta(\rho)^{d-n}$ with
moderate growth, so they are meromorphic, in particular definable. As also  $z_j^{-B_j}$
is  definable by Lemma~\ref{lem:defzB}, so is $\mathcal{ Y}$.

\medskip

``$\Rightarrow$''  Regular  singular is checked and monodromy is calculated locally around a
non-singular point of $D$, see~\cite[Thm.~II.4.1]{Del70}, so we can assume without loss of generality that
$n=1$. With  $ \tilde{\mathcal{Y}}(z)= \mathcal Y(z)  z_1^{-B_1} $ as above we have to
check that
\[
  \mathcal{\tilde Y}(z) = \sum_{j\in \mathbb Z} z_1^j f_j(z_2 , \ldots , z_d)
\]
is meromorphic along $D$, i.e.\ $f_j\equiv 0$ for $j\ll 0$. For this define  $E_j= \{f_j=0 \} \subset
\Delta(\rho)^{d-1}$   if $f_i\not\equiv 0$ and $E_j=\varnothing$ else.  This is a meager subset of $\Delta(\rho)^{d-1}$.
So  the countable union  $\cup_j E_j$
is a meager subset of $\Delta(\rho)^{d-1}$ as well. Thus there exists   $(\tilde z_2,\ldots , \tilde z_d)\in
\Delta(\rho)^{d-1}\setminus E$. Then if $\mathcal{\tilde Y}(z)$ is not meromorphic along
$D$, the one variable function $z_1\mapsto \mathcal{\tilde Y}(z_1,\tilde z_2 , \ldots ,
\tilde z_d)$ is not meromorphic either. This reduces us to
the case $d=1$ for checking regularity. The monodromy $M_1$ is also calculated by
restriction to these one-dimensional discs. So the implication follows from Proposition~\ref{prop:maindimone}
\end{proof}

\section{Local one variable case}\label{sec:onevariable}

 Our key ingredient in analyzing the local one variable case is the theory of
  irregular singular meromorphic differential equations and the o-minimal expansion
  $\mathbb R_{\rm sum,exp}$ of  $\mathbb R_{\rm an,exp}$ involving the multisummable
  functions due to~\cite{vdDS00}. 

In this section $(\mathcal E,\nabla)$ denotes an overconvergent flat bundle on
$\Delta(\rho)^\times$ meromorphic along $0$.
  By $d\in S^1$ we denote a direction and by $D_d(\rho):= \{ x d \, |\, x\in (0,\rho)\}
  \subset\Delta(\rho)^\times $ its associated ray.

\begin{prop}\label{prop:maindimone}
  If there are infinitely many directions $d$ such that all flat sections of $\nabla$ over $D_d(\rho)$
  are definable, then  $\nabla$ is regular singular with unitary monodromy eigenvalues.
\end{prop}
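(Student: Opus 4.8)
The plan is to combine the solution theory of Corollary~\ref{cor:irrsingdiffeq} with the oscillation estimate of Lemma~\ref{lem:non-defin}, carried out in the expanded o-minimal structure $\mathbb R_{\rm sum,exp}$. First I would reduce to the split normal form. Regular singularity and the property of having unitary monodromy eigenvalues are insensitive to a meromorphic gauge transformation and behave well under the pullback $z\mapsto z^m$: regular singularity is equivalent for a connection and its pullback, while the pullback raises the monodromy to its $m$-th power, so unitarity of the eigenvalues holds upstairs if and only if it holds downstairs. Both operations are definable in $\mathbb R_{\rm an,exp}$ (on a slightly smaller punctured disc a meromorphic gauge matrix equals a negative power of $z$ times a matrix holomorphic at $0$, hence definable, and $z\mapsto z^m$ is algebraic), and the pullback sends each ray $D_d(\rho)$ to $m$ rays; thus the hypothesis that infinitely many directions carry definable flat sections is preserved, and it suffices to prove the conclusion for the normal form equation. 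Discarding in addition the finitely many exceptional directions of Corollary~\ref{cor:irrsingdiffeq}, I retain infinitely many directions $d$ for which there is a fundamental solution $\mathcal Y(z)=P_d(z)\,\mathrm{Diag}(z^{B_1}e^{\int Q_1/z\,dz},\dots,z^{B_L}e^{\int Q_L/z\,dz})$ with $P_d$ having multisum entries and $P_d(0)=\mathrm{Id}$, and for which all flat sections are definable.

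Fix one block $i$ and set $\tilde Q_i(z)=\int Q_i(z)/z\,dz\in\frac1z\C[\frac1z]$, so that $\tilde Q_i=0$ exactly when $Q_i=0$. The heart of the argument is to isolate the scalar factor $z^{b_i}e^{\tilde Q_i(z)}$ and to show that its phase is definable in $\mathbb R_{\rm sum,exp}$. Writing $B_i=b_i\,\mathrm{Id}+N_i$ with $N_i$ nilpotent, the column of $\mathcal Y$ corresponding to the top of the Jordan chain of block $i$ is the flat section $\sigma(z)=z^{b_i}e^{\tilde Q_i(z)}\,p(z)$, where $p(z)$ is the corresponding column of $P_d$ and $p(z)\to e_{j_0}$ as $z\to0$; in particular its $j_0$-th coordinate is $\sigma_{j_0}(z)=z^{b_i}e^{\tilde Q_i(z)}(P_d)_{j_0,j_0}(z)$ with $(P_d)_{j_0,j_0}(z)\to1$, hence nonvanishing for small $|z|$. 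By hypothesis $\sigma$ is definable in $\mathbb R_{\rm an,exp}$, so its coordinate $\sigma_{j_0}$ and the phase $\sigma_{j_0}/|\sigma_{j_0}|$ are definable there. The factor $(P_d)_{j_0,j_0}$ is a multisum, hence definable in $\mathbb R_{\rm sum,exp}$ and nonvanishing near $0$, so its phase is definable there as well. Dividing, the phase of $z^{b_i}e^{\tilde Q_i(z)}$, namely $\exp\!\big(i\,\mathrm{Im}(b_i\log z+\tilde Q_i(z))\big)$, is definable in $\mathbb R_{\rm sum,exp}$.

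Now I would invoke Lemma~\ref{lem:non-defin}, whose proof uses only o-minimality of the ambient structure and therefore applies verbatim in $\mathbb R_{\rm sum,exp}$. It yields $b_i\in\R$ and, if the leading coefficient $q_k^{(i)}$ of $\tilde Q_i$ is nonzero, $\mathrm{Im}(q_k^{(i)}d^{-k})=0$. As $d$ ranges over $S^1$ the equation $\mathrm{Im}(q_k^{(i)}d^{-k})=0$ cuts out only finitely many directions whenever $q_k^{(i)}\ne0$; since it holds for infinitely many of our directions $d$, we must have $q_k^{(i)}=0$, contradicting that $q_k^{(i)}$ is the leading coefficient of a nonzero Laurent polynomial. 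Hence $\tilde Q_i=0$ and so $Q_i=0$. Running this over all blocks gives $Q_1=\dots=Q_L=0$, so by Corollary~\ref{cor:irrsingdiffeq} the normal form equation is regular singular with monodromy eigenvalues $\exp(2\pi i b_i)$; since each $b_i\in\R$, these are unitary. Transporting back through the gauge transformation and the covering $z\mapsto z^m$ as above, the original connection $\nabla$ is regular singular with unitary monodromy eigenvalues.

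I expect the main obstacle to be the bookkeeping that legitimizes working with the normal form: one must check that definability of flat sections descends correctly under the pullback and the meromorphic gauge transformation, and — crucially — that the multisum factor $P_d$ can be divided out within $\mathbb R_{\rm sum,exp}$, so that the clean oscillatory factor $\exp(i\,\mathrm{Im}(b_i\log z+\tilde Q_i))$ becomes accessible to Lemma~\ref{lem:non-defin}. The remaining step, the counting of directions solving $\mathrm{Im}(q_k^{(i)}d^{-k})=0$, is elementary.
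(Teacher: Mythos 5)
Your proposal is correct and follows essentially the same route as the paper: reduce to the normal form of Corollary~\ref{cor:irrsingdiffeq}, use the multisum factor $P_d$ and the o-minimal structure $\mathbb R_{\rm sum,exp}$ to strip off everything except the oscillatory phase $\exp(i\,\mathrm{Im}(b_i\log z+\tilde Q_i(z)))$, and conclude with Lemma~\ref{lem:non-defin}. The only (immaterial) differences are that you isolate a single Jordan-chain column and divide by the modulus and the phase of one multisum entry where the paper forms $P_d^{-1}\mathcal Y$ and multiplies by $\exp(-\mathrm{Re}(\cdots))$, and that you use infinitely many directions to force $q_k=0$ where the paper picks one direction with $\mathrm{Im}(q_kd^{-k})\neq 0$.
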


The key input in the proof is the formal and asymptotic solution theory for irregular singular differential
equations as summarized in Corollary~\ref{cor:irrsingdiffeq}.

\begin{proof}[Proof of Proposition~\ref{prop:maindimone}]
The coherent sheaf of $\mathcal M_{(\Delta(\rho)^\times, 0)}$-modules $\mathcal E$ on $\Delta(\rho)$ is  locally  free around $0$, so without loss of generality
$\mathcal E$ is the free sheaf  $\mathcal M^r_{(\Delta(\rho),0)}$. Then $(\mathcal E,\nabla)$ corresponds
to an overconvergent meromorphic differential equation~\eqref{eq:diffeq1}, see~\cite[Sec.~I.3]{Del70}.

Assume Proposition~\ref{prop:maindimone} is false,
 and  say the connection is irregular
singular.   We can take the pullback along $z\mapsto z^m$ for   large $m$
 and perform a meromorphic gauge transformation so that we are in the situation of  Corollary~\ref{cor:irrsingdiffeq}.
Assume without loss of generality that the $Q_1(z)$ in this corollary is non-zero, say
\[
  Q_1(z)=q_k z^{-k} + q_{k-1} z^{-k+1} + \cdots + q_1 z^{-1}
\]
with $q_k\neq  0$.
Choose a direction $d$ such that any solution of~\eqref{eq:diffeq1}  is $\mathbb R_{\rm an,exp}$-definable on
$D_d(\rho)$, such that we have $\mathrm{Im}(q_k d^{-k})\ne 0$ and such that the $P_d(z)$ as
in Corollary~\ref{cor:irrsingdiffeq} exists.

Then the function  $P_d(z)$ on $D_d(\rho)$
 is definable in the o-minimal structure
$\mathbb R_{\rm sum,exp}$, see Section~\ref{sec:remomin}, and so is
\[
  z\in D_d(\rho) \mapsto P_d^{-1}(  z) \mathcal Y ( z).
\]
But
multiplying the upper left entry of this matrix function with the $\mathbb R_{\rm an, exp }$-definable function
\[
D_d(\rho)\to \R,\quad  z\mapsto \exp(-\mathrm{Re} (\,Q_1(z)  +  b_1 \log(  z)) )
\]
we obtain   the  $\mathbb R_{\rm sum,exp}$-definable function
\begin{equation}\label{eq:nondeffun}
D_d(\rho)\to \C,\quad z\mapsto \exp(i \, \mathrm{Im} (\,Q_1(z) +  b_1 \log(  z)) ).
\end{equation}
However, this function is not definable in any o-minimal structure by
Lemma~\ref{lem:non-defin}, which is a contradiction. We conclude that all $Q_1(z),\ldots , Q_L(z)$ vanish.

If the monodromy eigenvalues are not
unitary, we have without loss of generality that $\mathrm{Im}(b_1)$ does not vanish
by the final part of Corollary~\ref{cor:irrsingdiffeq}
and still the function~\eqref{eq:nondeffun} would be non-definable by Lemma~\ref{lem:non-defin}, which produces the
same kind of contradiction. We conclude that all $b_1,\ldots, b_L$ are real.
\end{proof}

\end{document}